\documentclass{article}
\usepackage{inputenc}
\usepackage{amscd}
\usepackage{amssymb}
\usepackage{pstricks}
\newcommand{\R}{\mathbb{R}}
\newcommand{\N}{\mathbb{N}}

\newtheorem{prop}{Proposition}
\newtheorem{rem}{Remark}
\newtheorem{thm}{Theorem}
\newtheorem{exa}{Example}
\newenvironment{proof}{{\noindent\it{Proof}}.}{\hfill$\Box$}

\title{Persistence and periodic solutions 
in systems of delay differential equations}

\author{
{Pablo Amster}
\thanks{Partially supported by UBACyT Project 20020160100002BA. E-mail: \texttt{pamster@dm.uba.ar}}\\%
Departamento de Matem\'atica - FCEyN\\ 
Universidad de Buenos Aires \& IMAS-CONICET.
\\
{Melanie Bondorevsky}%
\thanks{Partially supported by UBACyT Project 20020160100002BA. E-mail: \texttt{mbondo@dm.uba.ar}}\\%
Departamento de Matem\'atica - FCEyN\\ Universidad de Buenos Aires \& IMAS-CONICET\\ 
}
\date{}

\begin{document}

\maketitle

\begin{abstract}

We study semi-dynamical systems associated to delay differential equations. We give a simple criteria to obtain weak and strong persistence and provide sufficient conditions to guarantee uniform persistence.
Moreover, we show the existence of non-trivial
$T$-periodic solutions via
topological degree techniques.
Finally, we prove that, in some sense, the conditions are also necessary. 

\end{abstract}

\section*{Introduction}

Let us consider the system of non-autonomous differential equations:
$$x'(t) = f(t,x(t)),$$
where $x(t)=(x_1(t), x_2(t),..., x_N(t))\in\R^N$, $f:\R\times\overline\Omega\to\R^N$ is a continuous function 
and $\Omega\subset\R^N$ is a bounded domain with smooth boundary.
It is well known that if the solutions are uniquely defined over $\R$ and 
then the flow associated to the system:
$$
\Phi:\R\times\overline\Omega\to\overline\Omega,
$$
given by
$$
\Phi(t,x_0) =x(t)
$$
induces a dynamical system.

\bigskip

In this work, we consider semi-dynamical systems associated to delay
differential equations which, in turn, describe the behaviour of biological processes. In particular, we shall restrict our attention to study solutions of systems which must remain non-negative for all times. We are interested in establishing conditions in order to guarantee that non-trivial solutions with non-negative initial condition remain in the interior of the positive cone of $\R^N$  as time evolves, i.e. $x_j(t)>0$ for all $j$ and all $t>0$.
More precisely, we are
interested in studying the notion of \emph{persistence} from the following point of view: we shall say that a system is \emph{persistent} if it guarantees the survival of the total population. 
It is clear that this notion of persistence does not imply the survival of all the species.

Various definitions of persistence have been given over the literature (see e.g. \cite{BFW}, \cite{ST}): a system is said to be \emph{weakly persistent} if solutions do not asymptotically approach the origin as  $t\to+\infty$; \emph{strong persistence} means that each solution is eventually bounded away from zero; \emph{uniform persistence} means that solutions are eventually uniformly bounded away from zero.  We shall state appropriate definitions below. 

The first results in this subject belongs to 
Butler, Freedman and Waltman \cite{BFW}  that proved that if $X$ is a locally compact metric space and the flow verifies some specific conditions, uniform persistence is equivalent to weaker forms of persistence. Namely, uniform persistence is equivalent to weak persistence.

With similar assumptions, Butler and Waltman \cite{BW} showed a necessary and sufficient condition for a dynamical system to be uniformly persistent.
A theorem which establishes a necessary and sufficient condition for a semi-dynamical system to be uniformly persistent was proved by Fonda \cite{F}. In precise terms, the system is uniform persistent if and only if there exists $U\subset  X$ an open neighborhood of $0\in U$ and $H:X\to (0,+\infty)$ continuous such that: 
\begin{enumerate}
\item $H(x)=0 \iff x=0$.
\item $\forall\; x\in U\backslash\{0\}\; \exists\, t_x>0:$ $H(\Phi(t_x,x)) > H(x)$. 
\end{enumerate}

This work is motivated by results on persistence which have a significant relevance in biological models, see e.g. 
\cite{BB}, \cite{BBI},  \cite{EPR},  \cite{RW}, \cite{RZ}, \cite{WJW}.
With this in mind, in this paper we study persistence in the 
context of delay differential equations:

\begin{equation}
\label{eq}
x'(t)=f(t,x(t),x(t-\tau))
\end{equation}
where $f:[0,+\infty)\times[0,+\infty)^{2N}\to \R^N$ is continuous and 
$\tau>0$ is the delay.
An initial condition for (\ref{eq}) 
can be 
expressed
in the following way
\begin{equation}
\label{ic}
x_0=\varphi,    
\end{equation}
with $\varphi:[-\tau,0]\to [0,+\infty)^{2N}$ a continuous function and $x_t\in C([-\tau,0],\R^N)$, defined by $x_t(s)=x(t+s)$.

Since the space of initial values is infinite dimensional, the local compactness property does not hold. We shall  consider a \emph{guiding type function} and propose suitable conditions to compensate the lack of compactness in order to extend some results that hold for
the non-delayed case.
\medskip{}{}

Moreover, we shall  study the existence of 
$T$-periodic solutions. With this aim, we shall consider the persistence assumptions of the previous section and obtain appropriate invariant subsets of $X$, which yield  the existence of periodic orbits. 




Finally, we shall prove that, in some sense, the previous conditions are also necessary. Indeed, if they are not fulfilled,  
then accurate extra assumptions imply that zero is a global attractor.



\subsection*{Motivation: Nicholson's Equation}

The Nicholson's \cite{N} blowflies equation reads

\begin{equation}
    \label{nich}
    x'(t)=-d x(t) + p x(t-\tau)e^{-x(t-\tau)},
\end{equation} 
where $x(t)$ represents the population,  $d > 0$ is the mortality rate, $ p> 0$ is the production rate and $\tau>0$ is the delay. For a complete survey on the subject, see  \cite{BBI}. We state some relevant results:  

\begin{enumerate}
    \item If $p\le d$ then there do not exist positive equilibria and $0$ is a global attractor of positive solutions.
    
    \item If $p>d$, there exists only one positive equilibrium, which is asymptotically stable 
    when $p\ge de^2$ for $\tau<\tau^*(p)$ and
    for all $\tau>0$ if $p<de^2$. 
    In addition, if $x$ is a solution with positive initial condition then $\liminf_{t\to+\infty}x(t)\ge \min \{\ln \left(\frac pd\right), 
    e^{-\tau d}\}$. 
    
    \item If $p(t)$ and $d(t)$ are positive $T$-periodic functions then the equation has positive $T$-periodic solutions if $p(t)>d(t)$ for all $t>0$, and otherwise there are no positive $T$-periodic solutions. 
\end{enumerate}

The conclusions can be generalized for the system studied in \cite{AD}:

\begin{equation} 
    \label{nich-sys}
    \left\{\begin{array}{c}
x_1'(t)=-d_1 x_1(t) + b_1x_2(t)+ p_1 x_1(t-\tau)e^{-x_1(t-\tau)}    \\
        x_2'(t)=-d_2 x_2(t) + b_2x_1(t)+ p_2 x_2(t-\tau)e^{-x_2(t-\tau)}  
    \end{array}
    \right.
\end{equation}
\begin{enumerate}
\item If $p_i+b_i\le d_i$ then $0$ is a global attractor of the positive solutions. 

\item There is uniform persistence as long as $p_i+b_i>d_i$.
        
\item There are positive $T$-periodic solutions if $b_i(t), d_i(t), p_i(t)$ are positive $T$-periodic functions and $b_i(t)<d_i(t) <p_i(t)+d_i(t)$ for all $t>0$. 
\end{enumerate}

\medskip

\medskip

\subsection*{Outline}

The paper is organized as follows. In the first section, we set up notation and state a preview of our main results. 
In the second section, we prove some basic facts concerning weak and strong persistence. 
In the third section, we proceed with the study of uniform persistence.
In the fourth section, we analyze the existence of periodic solutions by means of an appropriate fixed point operator. 
Finally, we study the global attractiveness of the trivial equilibrium.

\medskip 

\section{Persistence in delayed systems}

\subsection{Persistence definitions}

In this section, we follow tha approach of  \cite{ST}. 
Let $X$ be a metric space and $\rho(x): X\to [0,+\infty)$ a  continuous function. A semi-flow $\Phi :[0,+\infty)\times X\to  X$ is a continuous function such that

\begin{enumerate}
    \item $\Phi(0,x)=x\quad\,\forall\, x\in X$;
    \item $\Phi(t,\Phi(t,x))=\Phi(t+s,x)\quad\forall\, x\in X, \forall\, s, t\in [0,+\infty).$
\end{enumerate}

We shall say that a semi-flow is

\begin{itemize}  

\item weakly $\rho$-persistent, if
$$\limsup_{t\to+\infty}  
\rho(\Phi(t,x)) > 0\qquad \forall\, x \in  X\backslash\{0\}.$$
    
\item strongly $\rho$-persistent, if
$$\liminf_{t\to+\infty}  
\rho(\Phi(t,x)) > 0\qquad \forall\, x \in X\backslash\{0\}.$$

\item uniformly $\rho$-persistent, if
there exists $\varepsilon > 0$ such that
$$\liminf_{t\to+\infty}  
\rho(\Phi(t,x)) > \varepsilon \qquad \forall\, x \in  X\backslash\{0\}.$$

\end{itemize}

\medskip

\subsection{Preview of the main results}
In order to study the dynamical behaviour of the solutions of a DDE system, we shall establish an appropriate condition to ensure that the induced semi-dynamical system is well-defined; namely, that non-trivial solutions with non-negative initial value  remain positive. Secondly, with the purpose of studying persistence of the associated semi-dynamical system, we shall focus on the case that 0 is a uniform repeller. 
With this aim, we shall consider a suitable guiding function $V$ in order to study the flow in sets of the form $\{V(x)<r\}$.  In this sense, we may consider conditions to guarantee that the flow stays away from the origin. In order to control the delayed part of $f$, we shall assume that $V$ is globally monotone or, 
instead, that $V$ is locally monotone together with a condition on the logarithmic derivative of $V$. Under these specific conditions we will deduce weak, strong and uniform persistence. Indeed, we shall show that all trajectories of the DDE system with positive initial data lie outside a closed ball centered at the origin of $\R^N$ for sufficiently large values of $t$.

In addition, we shall consider extra hypotheses over sets of the form $\{V(x)\ge R\}$, with $R\gg0$, whence the existence of $T$-periodic positive solutions will be proven. 
Finally, we analyze the global attractiveness of the trivial equilibrium by means of an appropriate population-type function. 


\section{Weak and strong persistence}

With population models in mind, 
we shall consider non-negative initial values to study persistence of delay differential equations. 
Additionally, we shall require an extra assumption to guarantee the flow-invariance property. This hypothesis implies that non-trivial solutions with non-negative initial data remain strictly positive.
\medskip{}{}

{\bf (H1)} \label{posit}
If for some $j\in\{1,2,...,N\}$, $x_j=0$  and $y\ne 0$, then $f_j(t,x,y)>0$ for 
all $t>0.$    
\medskip{}{}

\noindent Indeed, suppose that $\min_k x_k(t_0) =x_j(t_0)= 0$ for the first time at some $t_0>0$, then 
$0\ge x_j'(t_0)=f_j(t_0,x(t_0),x(t_0-\tau))>0,$ a contradiction. 
\medskip{}{}

\noindent Hence, the flow associated to (\ref{eq}) is
\begin{equation}
    \label{flow}
    \Phi:[0,+\infty)\times C([-\tau,0],[0,+\infty)^{N})\to C([-\tau,0],[0,+\infty)^{N})
\end{equation}
given by 
$$
\Phi(t,\varphi)=x_t.
$$


\medskip{}{}

Observe that equation 
(\ref{nich})
with $p\le d$ shows that {\bf (H1)} alone does not suffice to 
guarantee the weak persistence of 
solutions with positive initial data.
With this model in mind, we shall consider throughout the paper a $C^1$ Lyapunov-like mapping 
$$V:(0,+\infty)^N\to (0,+\infty)$$  such that 
$$\lim_{\vert x\vert \to 0}V(x)= 0.$$

The obvious example is $V(x):= |x|^2$, where $|\cdot |$ stands for the Euclidean norm of $\R^N$, although many other choices of $V$ could be used in applications. The point is that, in contrast with Lyapunov functions, we shall require that $\dot V>0$ for $x$ close to the origin where, as usual,
$$\dot V(t)=\frac{d{V\circ x}}{dt}.$$

In this sense, $V$ may be compared with the \emph{guiding functions} introduced by Krasnoselskii and successfully extended for DDEs (see e.g. \cite{F2}, \cite{K}, \cite{KRA}) but, unlike the guiding functions, our conditions shall involve 
sets of the form $\{ V(x)<r\}$ instead of $\{ |x|\ge r\}$.

The simplest situation would consist in assuming 
that $0$ is a uniform repeller
for the vector field $f(\cdot,\cdot,y)$ when  $y$  is close to the origin:

\begin{prop}\label{prop1}
Assume {\bf (H1)} holds and 
\medskip 

\noindent {\bf (H2)} there exist $t_0,r_0>0$ such that
$$\langle \nabla V(x),f(t,x,y)\rangle >0 \quad \hbox{for } t>t_0\hbox{ and } V(x),V(y)<r_0.$$
Then the system is weakly persistent.

\end{prop}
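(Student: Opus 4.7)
The proof is a straightforward contradiction argument built on hypothesis \textbf{(H2)}. The plan is to suppose the system fails to be weakly persistent, exhibit a non-trivial solution whose $V$-value tends to $0$, and then derive a contradiction by showing \textbf{(H2)} forces $V\circ x$ to be eventually strictly increasing from a positive value.

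First I would fix a non-trivial initial condition $\varphi$ and verify the standing positivity property: by the argument given under \textbf{(H1)}, the solution $x(t)$ lies in $(0,+\infty)^N$ for all $t>0$, so in particular $V(x(t))>0$ for all $t>0$ and $t\mapsto V(x(t))$ is a well-defined, strictly positive $C^1$ function. This is what lets me talk about $\dot V(x(t))$ and treat $V(x(T))>0$ as a genuine lower bound at any finite time $T$.

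Next, suppose toward contradiction that weak persistence fails, i.e.\ there is $\varphi\neq 0$ with
\[
\limsup_{t\to+\infty}\rho(\Phi(t,\varphi))=0,
\]
where $\rho$ is the persistence measure associated with $V$ (for concreteness, $\rho(\psi):=V(\psi(0))$, continuously extended by $V(0)=0$ using $\lim_{|x|\to 0}V(x)=0$). Then $V(x(t))\to 0$ as $t\to+\infty$. Pick $T>t_0+\tau$ large enough so that $V(x(s))<r_0$ for every $s\geq T-\tau$; in particular, for every $t\geq T$ both $V(x(t))<r_0$ and $V(x(t-\tau))<r_0$, and $t>t_0$. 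Hypothesis \textbf{(H2)} then gives
\[
\dot V(x(t))=\langle\nabla V(x(t)),f(t,x(t),x(t-\tau))\rangle>0\qquad\forall\,t\geq T.
\]

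Therefore $t\mapsto V(x(t))$ is strictly increasing on $[T,+\infty)$, which forces
\[
\liminf_{t\to+\infty}V(x(t))\geq V(x(T))>0,
\]
contradicting $V(x(t))\to 0$. This finishes the proof.

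The argument is essentially routine once the setup is correct; the only real delicacy is ensuring that \emph{both} arguments of $f$, namely $x(t)$ and $x(t-\tau)$, simultaneously lie in the region $\{V<r_0\}$ when we invoke \textbf{(H2)}. This is why the $\tau$-shift appears in the choice of $T$: I need $V$ small on an interval of length $\tau$ preceding the times at which I want positivity of $\dot V$. That small bookkeeping step is the one I would be most careful about when writing the formal proof.
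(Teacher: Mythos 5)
Your proof is correct, but it follows a different (and in fact more direct) route than the paper. The paper argues by contradiction via a minimizing sequence: it takes $t_n$ with $v(t_n)=\min_{[0,n]}v$, notes $t_n\to+\infty$ and $v'(t_n)\le 0$, and then contradicts \textbf{(H2)} at the single times $t_n$. You instead exploit the full strength of \textbf{(H2)} --- positivity of $\langle\nabla V(x),f(t,x,y)\rangle$ for \emph{all} $x,y$ with $V<r_0$ --- to conclude that $v=V\circ x$ is eventually strictly increasing, which is flatly incompatible with $v(t)\to 0$. Both arguments are valid; yours is cleaner for this proposition, and your care in choosing $T$ so that $V(x(s))<r_0$ on a window of length $\tau$ (so that both arguments of $f$ lie in $\{V<r_0\}$) is exactly the point the paper glosses over slightly. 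What the paper's minimizer technique buys is reusability: it is the template for Propositions 2 and 3, where the sign condition is only available on the diagonal $f(t,x,x)$ and one can no longer conclude global monotonicity of $v$, but can still extract a sequence of times where $v'\le 0$ and the comparison hypotheses apply. The only minor discrepancy is your choice of $\rho(\psi)=V(\psi(0))$ versus the paper's implicit $\rho(\psi)=|\psi(0)|$; since $V(x)\to 0$ as $|x|\to 0$, the negation of weak persistence yields $v(t)\to 0$ either way, so this does not affect the argument.
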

  
 \begin{proof}
 Suppose $x(t)$ is a positive solution such that $\lim_{t\to +\infty} 
|x(t)|= 0$ and define $v(t):= V(x(t))$. Because $x(t)\ne 0$ for all $t$, it follows that $v$ is well defined and tends to $0$ as $t\to +\infty$. 

Set $t_n$ such that $v(t_n)=\min_{t\in [0,n]}v(t)$. It is easily seen that $t_n\to +\infty$ and, for $n$ large, 
$$0\ge  v'(t_n)=
\langle \nabla V(x(t_n)),f(t_n,x(t_n),x(t_n-\tau)\rangle, 
$$ 
which is a contradiction.
\end{proof}

\begin{rem}

\begin{enumerate}
    \item Notice, incidentally, {\bf (H2)} alone does not imply necessarily that solutions remain non-negative. 

    \item Observe that when $\tau=0$, {\bf (H2)} implies the hypothesis in \cite{F}.
    
    \item A standard assumption in population models is that $f(t,0,0)=0$, i. e.  that $0$ is an equilibrium point; in this case, {\bf (H2)} obviously implies this equilibrium cannot be asymptotically stable. 

\end{enumerate}
\end{rem}

However, the previous condition is not fulfilled in many models, even in the scalar case. For example, Nicholson's equation (\ref{nich}), which satisfies, instead, a weaker condition when $p>d$, namely $f(x,x)>0$ for $0<x < r:=\ln \frac pd$.  
The situation can be generalized for systems in terms of the vector field $f(\cdot,x,x)$. 
Nevertheless, it is clear that this do not suffice to ensure the persistence of the system because the function $|x(t)|-|x(t-\tau)|$ does not necessarily tend to $0$. We shall avoid this difficulty adding a $V$-monotonicity property. 

\begin{prop}\label{prop2}
Assume {\bf (H1)} holds, 
\medskip 

\noindent {\bf (H3)}\hbox{ there exist }$t_0,r_0>0$\hbox{ such that }
$$\langle \nabla V(x),f(t,x,x)\rangle >0 \hbox{ for } t>t_0,V(x)<r_0$$
and 
\medskip{}{}

\noindent{\bf (H4)}
$\qquad \langle \nabla V(x),f(t,x,y)\rangle \ge \langle \nabla V(x),f(t,x,x)\rangle\;\hbox{ if }\; V(y)\ge V(x).$
\medskip{}{}

\noindent Then the system is strongly persistent.
\end{prop}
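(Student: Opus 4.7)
The plan is to adapt the argument from Proposition~\ref{prop1}, using the new hypothesis (H4) to handle the delay and (H3) in place of (H2). Suppose, towards a contradiction, that there exists a positive solution $x(t)$ with $\liminf_{t\to +\infty}V(x(t))=0$. By (H1), the components of $x$ stay strictly positive for all $t>0$, so $v(t):=V(x(t))$ is a well-defined positive function. As in the proof of Proposition~\ref{prop1}, pick $t_n\in [0,n]$ such that $v(t_n)=\min_{t\in[0,n]}v(t)$.

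Two properties of the sequence $\{t_n\}$ are essential. Because $v$ is continuous and strictly positive, it admits a positive lower bound on any compact interval; combined with $\liminf v=0$, this forces both $t_n\to +\infty$ and $v(t_n)\to 0$. Moreover, the minimizing choice yields simultaneously $v'(t_n)\le 0$ (either $t_n$ is an interior critical point, or $t_n=n$, in which case $v$ must be non-increasing at the right endpoint) and, for $t_n\ge\tau$, the inequality $v(t_n-\tau)\ge v(t_n)$, since $t_n-\tau\in[0,n]$.

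Applying (H4) at $x=x(t_n)$ and $y=x(t_n-\tau)$ then gives
$$v'(t_n)=\langle\nabla V(x(t_n)),f(t_n,x(t_n),x(t_n-\tau))\rangle\ge\langle\nabla V(x(t_n)),f(t_n,x(t_n),x(t_n))\rangle,$$
and for $n$ large enough that $t_n>t_0$ and $v(t_n)<r_0$, the right-hand side is strictly positive by (H3). This contradicts $v'(t_n)\le 0$ and completes the argument.

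I expect the only real obstacle to be the construction of the sequence $t_n$: we need a time that is simultaneously a minimum of $v$ (so $v'(t_n)\le 0$) and whose delayed value $v(t_n-\tau)$ dominates $v(t_n)$ (so (H4) can be activated to replace the delayed vector field by the non-delayed one). Restricting attention only to local minima of $v$ would not automatically give the second condition. The trick, already present in the proof of Proposition~\ref{prop1}, is to take $t_n$ as a global minimizer on the growing window $[0,n]$, which bundles both requirements into a single choice.
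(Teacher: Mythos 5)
Your proposal is correct and follows essentially the same route as the paper: take the minimizer $t_n$ of $v$ over a growing window, observe that $v'(t_n)\le 0$ and $v(t_n-\tau)\ge v(t_n)$ simultaneously, then chain \textbf{(H4)} with \textbf{(H3)} to get a contradiction. The only cosmetic difference is that the paper minimizes over $[0,s_n]$ for a sequence $s_n$ witnessing the failure of strong persistence rather than over $[0,n]$, which changes nothing.
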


\begin{proof}
Suppose exists a sequence $s_n\to +\infty$ such that $x(s_n)\to 0$ and set $t_n$ such that $v(t_n) =\min_{t\in [0,s_n]} v(t)$. For $n$ sufficiently large, it follows that $v(t_n)<r_0$,
$v'(t_n)\le 0$ and $v(t_n-\tau)\ge v(t_n)$.
As before, 
we obtain  
$$0\ge v'(t_n)=\langle \nabla V(x(t_n)),f(t_n,x(t_n),x(t_n-\tau)\rangle
\ge \langle \nabla
V(x(t_n)),f(t_n,x(t_n),x(t_n)\rangle,$$  a contradiction since, by {\bf (H3)}, this latter quantity is strictly positive.

\end{proof}

\begin{rem}
\begin{enumerate}
    \item For example, Theorem 5.2 of Berezansky and Braverman in \cite{BB} with $l=1$ and $h_1(t)=t- \tau$ can be regarded as a consequence of the preceding result with $N=1$,  taking $V(x)=x$.

\item Proposition \ref{prop2} is also verified by Nicholson's system (\ref{nich-sys}), with a non-smooth function $V(x)=\min \{ x_1,x_2\}$ {\rm (see
\cite{AD})}:

$$\nabla V(x_1,x_2) =\left\{\begin{array}{cc}
 (1,0) & x_1 < x_2;  \\
    (0,1)  & x_1 > x_2.
\end{array} 
\right.$$
\end{enumerate}
\end{rem}
\medskip{}{}

Due to the fact that 
the distance between $x(t)$ and $x(t-\tau)$ may  be large, the global monotonicity assumption cannot be replaced by a weaker one such as local monotonicity.
Nonetheless, this condition is verified in (\ref{nich}) with $V(x)=x$, $\eta=1$ and proves to be sufficient in presence of
an extra assumption that controls the logarithmic derivative of $v$:

\begin{prop}
\label{prop3}
Assume that {\bf (H1)} and {\bf (H3)} hold,

\medskip{}

\noindent {\bf (H5) }\hbox{there exists }$\eta>0$\hbox{ such that }$$ \langle \nabla V(x),f(t,x,y)\rangle \ge \langle \nabla V(x),f(t,x,x)\rangle\hbox{ if }V(x)\le V(y)\le \eta\,
$$
and
\medskip{}

\noindent {\bf (H6)}
$\hbox{there exists }k>0\hbox{ such that } \langle \nabla V(x),f(t,x,y)\rangle \ge -k V(x).$

\medskip 

\noindent Then the system is strongly persistent.

\end{prop}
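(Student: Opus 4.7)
The plan is to imitate the contradiction argument from Proposition \ref{prop2}, but replace the global monotonicity trick (which is no longer available) by a two-step control of $v(t_n-\tau)$: first use (H6) to bound how large $v(t_n-\tau)$ can be when $v(t_n)$ is tiny, then use the local monotonicity in (H5) once $v(t_n-\tau)$ is below the threshold $\eta$.

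Concretely, I would argue by contradiction. Suppose strong persistence fails; then along some positive solution there exist $s_n\to+\infty$ with $v(s_n):=V(x(s_n))\to 0$. Mimicking Proposition \ref{prop2}, set $t_n\in[0,s_n]$ to be a point where $v$ attains its minimum on $[0,s_n]$. Exactly as before, $v(t_n)\le v(s_n)\to 0$ forces $t_n\to+\infty$, so for $n$ large one has $t_n>t_0+\tau$, $v(t_n)<r_0$, and $v(t_n)\le v(t_n-\tau)$. At such a minimum, $v'(t_n)\le 0$.

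The new ingredient is the estimate of $v(t_n-\tau)$. Along the trajectory, (H6) yields $v'(s)\ge -k v(s)$, so $\frac{d}{ds}\bigl(v(s)e^{ks}\bigr)\ge 0$; in particular $v(s)e^{ks}$ is non-decreasing, which gives the Grönwall-type inequality
$$v(t_n-\tau)\le v(t_n)\, e^{k\tau}.$$
Since $v(t_n)\to 0$, for $n$ large we obtain $v(t_n-\tau)\le \eta$, and combined with $v(t_n)\le v(t_n-\tau)$ this puts us exactly in the regime where (H5) applies. Hence
$$v'(t_n)=\langle \nabla V(x(t_n)),f(t_n,x(t_n),x(t_n-\tau))\rangle \ge \langle \nabla V(x(t_n)),f(t_n,x(t_n),x(t_n))\rangle,$$
and the right-hand side is strictly positive by (H3), contradicting $v'(t_n)\le 0$.

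The delicate step, and what dictates the shape of the hypothesis, is precisely bringing $v(t_n-\tau)$ below $\eta$: without a global comparison like (H4), one has to quantify how fast $v$ can grow backwards in time, and this is exactly the content of (H6). Everything else is essentially a rerun of Proposition \ref{prop2}, so once the Grönwall bound is in place the proof finishes cleanly.
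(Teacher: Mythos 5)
Your proposal is correct and follows essentially the same route as the paper: the key step in both is the Gr\"onwall-type consequence of {\bf (H6)}, namely $v(t_n-\tau)\le e^{k\tau}v(t_n)$ (the paper integrates $(\ln v)'\ge -k$, you note that $v(s)e^{ks}$ is non-decreasing --- the same computation), which for $n$ large places $v(t_n-\tau)$ below $\eta$ so that {\bf (H5)} and {\bf (H3)} yield the contradiction exactly as in Proposition \ref{prop2}.
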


\begin{proof}
Let us begin by noticing that {\bf (H6)} implies 
$$v(t-\tau) \le e^{k\tau}v(t) \hbox{ for all } t\ge \tau.$$
Indeed, since 
$$v'(t)= 
\langle \nabla V(x(t)), f(t,x(t),x(t-\tau)\rangle \ge -k V(x(t))= -kv(t),$$ we deduce that 
$$
\ln v(t) - \ln v(t-\tau) \ge -k\tau,$$
that is: $v(t) \ge e^{-k\tau} v(t-\tau).$
\medskip{}{}

Thus, the proof follows as in the preceding proposition, taking $n$ large enough such  that also
$v(t_n) \le e^{-k\tau}\eta $.

\end{proof}

\medskip{}{}

\begin{rem}
It is an easy exercise now to prove that {\bf (H1)}, {\bf (H2)} and {\bf (H6)}  imply strong persistence. Indeed, weak persistence is obtained under {\bf (H1)}, {\bf (H2)} as stated in Proposition \ref{prop1}. Condition {\bf (H6)}  gives an upper bound for $x(t-\tau)$ in terms of $x(t)$, whence the strong persistence is  achieved. 
\end{rem}

\section{Uniform persistence}

In this section, 
we shall analyze whether or not the  assumptions of the previous section
are sufficient to prove the uniform persistence. 
Specifically, we are interested in obtaining an accurate value of $\mu>0$ such that the set $V^\mu:= V^{-1}(0,\mu)$
is a repeller, 
i. e. all trajectories with positive initial data lie outside the set for sufficiently large values of $t$. 
Uniform persistence follows then from the fact that $V^\mu$ contains a set of the form $\{x\in\mathcal{C}:0<|x|<\xi\}$ for some positive $\xi$.

\medskip
\begin{rem}
It is worthy mentioning that uniform persistence is easily deduced under the assumptions {\bf(H1)}, {\bf(H2)} combined with {\bf(H4)} or {\bf(H6)}. 

\end{rem}{}

In view of the preview remarks, we shall restrict our attention to analyze the weaker conditions of Propositions \ref{prop2} and \ref{prop3}.

\medskip

With this in mind, recall 
{\bf(H3)}, that is
$$\langle \nabla V(x),f(t,x,x)\rangle >0 \qquad \hbox{for } t>t_0,\; V(x)<r_0$$
and let us firstly observe if we choose $i<r_0$ such that
$$
\liminf_{t\to+\infty} v(t) = i,
$$ 
then three different situations may be considered: 

\begin{enumerate}
    \item \label{caso1}
    $v(t)\ge i$ for all $t\gg 0$. Then we may choose as before a sequence $(t_n)_{n\in\N}$ such that 
     $$
     \lim_{n\to+\infty}v(t_n)=i,
     $$ 
     $v'(t_n)\le 0$ and $v(t_n-\tau) \ge v(t_n)$. 
     Thus, a contradiction yields under {\bf (H4)}.
     Alternatively, if we  assume {\bf(H5)} and {\bf(H6)}, then we obtain a contradiction provided that $i< e^{-k\tau}\eta$. 
    
    \item \label{caso2}
    $v(t)$ oscillates around $i$. Then we may choose 
    a sequence $t_n\to +\infty$ such that $v(t_n)\to i^-$  and $v'(t_n)\le 0$. For instance, we may set $s_n\to +\infty$ such that $v(s_n)>i$ and then $(t_n)_{n\in\mathbb{N}}$ satisfying $v(t_n)=\min_{[s_n,+\infty)} v(t)$. Notice, however, that if $t_n$ is close to $s_n$, then it may happen that $v(t_n-\tau)< v(t_n)$ and 
    {\bf (H3)} and {\bf (H4)} are not of any help. Observe, for future considerations, that in this latter situation $v(t_n-\tau)\to i^-$ as well.  
    
    
    \item\label{caso3}  
     $v(t)\to i^-$ as $t\to+\infty$. Again, the previous conditions cannot be applied in a direct way. Here, it may even occur that no sequence $(t_n)_{n\in\N}$ with $v'(t_n)\le 0$ exists. 
        
        
\end{enumerate}

In order to take a closer look to the last two  `tricky cases',
let us recall that, from the previous section, under 
assumptions {\bf(H1)}, {\bf(H3)} together with {\bf(H4)} or {\bf(H5)}--{\bf(H6)}
we already know that $i>0$. 
\medskip 

To fix ideas, consider firstly the autonomous scalar equation, i. e. $N=1$ and $f$ does not depend on $t$.
Suppose that {\bf(H3)} is verified with $V(x)=x$; then, for each $i\in (0,r_0)$ there exist $\delta=\delta(i)>0$ and $c=c(i)>0$ such that 
$f(x,y)>c$ for $x,y\in(i-\delta,i)$. This, in turn, implies:

\begin{enumerate}
\item If $x(t_n-\tau), x(t_n)\to i^-$ and $x'(t_n)\le 0$ then 
$0\ge f(x(t_n),x(t_n-\tau))$, a contradiction because  $f(x(t_n),x(t_n-\tau))>0$ when $n$ is large. 
    \item If $x(t)\to i$ as $t\to +\infty$, then $x'(t)> c$ for $t\gg 0$ whence $x(t)\to +\infty$, a contradiction. 
    
\end{enumerate} 

\medskip{}

This can be generalized for a system, as shall be described in the  following result.

\begin{thm}
\label{up1}
Assume that {\bf (H1)}, {\bf (H4)} hold and 
\medskip

\noindent {\bf (H7)}\hbox{ there exists }$r_0>0$  
$$\hbox{such that for all }
i\in (0,r_0) \liminf_{t\to+\infty, V(x),V(y)\to i^-}\langle \nabla V(x),f(t,x,y)\rangle>0.
$$

\noindent Then the system is uniformly persistent.
\medskip

More precisely, all solutions of (\ref{eq})-(\ref{ic}) 
with $\varphi_j(t)\geq 0$ for all $j$ and 
\hbox{$t\in [-\tau,0]$}
satisfy 
$$\liminf_{t\to+\infty} V(x(t))\ge r_0,$$
that is, $x(t)\notin V^{r_0}$ for $t$ sufficiently large.
\end{thm}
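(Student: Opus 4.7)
The plan is a contradiction argument following the trichotomy anticipated in the paragraphs preceding the theorem. Set $v(t):=V(x(t))$; by (H1) the solution stays in the open positive cone, so $v$ is well defined and strictly positive for $t>0$. Suppose $\liminf_{t\to+\infty} v(t)=i$ for some $i<r_0$ and distinguish the three scenarios: (1) $v(t)\ge i$ for all $t\gg 0$, (2) $v$ oscillates around $i$, or (3) $v(t)\to i^-$ monotonically.

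In scenario (1) I would mimic the proof of Proposition \ref{prop2}: set $s_n\to+\infty$ with $v(s_n)\to i$ and $t_n:=\arg\min_{[0,s_n]} v$; after shifting the window to $[T_n,s_n]$ with $T_n\to+\infty$ slowly (if necessary, to avoid a finite globally trapping minimizer), one has $t_n\to+\infty$, $v(t_n)\to i$, $v'(t_n)\le 0$ and $v(t_n-\tau)\ge v(t_n)$. Hypothesis (H4) gives
$$v'(t_n)\ge \langle \nabla V(x(t_n)),f(t_n,x(t_n),x(t_n))\rangle,$$
and (H7), applied at the level $i$ with $y=x$ and extended across $V=i$ by continuity of $(x,y)\mapsto\langle \nabla V(x),f(t,x,y)\rangle$, produces a uniform positive lower bound for the right-hand side, contradicting $v'(t_n)\le 0$. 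In scenario (2) I would pick $s_n\to+\infty$ with $v(s_n)>i$ and let $t_n$ realize $\min_{[s_n,+\infty)} v$; as emphasized in the preview, in this case $v(t_n)\to i^-$ and $v(t_n-\tau)\to i^-$, so that (H7) applies directly to $\langle \nabla V(x(t_n)),f(t_n,x(t_n),x(t_n-\tau))\rangle$ and yields a positive lower bound for $v'(t_n)$, again a contradiction. Scenario (3) is the cleanest: $v(t)$ and $v(t-\tau)$ both tend to $i^-$, so (H7) forces $\liminf_{t\to+\infty} v'(t)>0$, whence $v(t)\to+\infty$, contradicting $v\to i$.

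Having ruled out $\liminf v(t)<r_0$, the conclusion $\liminf_{t\to+\infty} V(x(t))\ge r_0$ follows, and the uniform persistence statement comes from $\lim_{|x|\to 0}V(x)=0$, which guarantees some $\xi>0$ with $V(x)\ge r_0\Rightarrow |x|\ge\xi$, so $x(t)$ remains outside the ball $\{|x|<\xi\}$ for $t$ large. I expect the main obstacle to be scenario (1): since $V(x(t_n))$ approaches $i$ from above rather than from below, (H7) is not directly applicable to the diagonal term produced by (H4), and one must either invoke continuity of $\langle \nabla V,f\rangle$ to pass to the boundary level $V=i$, or apply (H7) at a slightly larger level $i'\in(i,r_0)$ and bridge the gap between $i'-\delta$ and $i$ using the fact that (H7) holds for \emph{every} $i\in(0,r_0)$.
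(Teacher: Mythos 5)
Your argument is correct and follows essentially the same route as the paper's proof: assume $\liminf_{t\to+\infty} v(t)=i\in(0,r_0)$, dispose of the case $v(t)\to i^-$ directly from {\bf (H7)} (which forces $v'(t)>c>0$ for large $t$), and otherwise extract a minimizing sequence $t_n\to+\infty$ with $v(t_n)\to i$, $v'(t_n)\le 0$ and $v(t_n-\tau)\ge v(t_n)$, reaching a contradiction via {\bf (H4)} combined with {\bf (H7)}. Your explicit concern that {\bf (H7)} is stated for $V(x),V(y)\to i^-$ while the minimizers may approach $i$ from above is well taken --- the paper's own proof passes over this silently --- and your proposed remedies (continuity of $\langle\nabla V(x),f(t,x,y)\rangle$, or exploiting that {\bf (H7)} holds for every level in $(0,r_0)$) are exactly the right way to close that small gap.
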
 
\medskip{}{}

A slightly different conclusion holds when the global monotonicity assumption {\bf (H4)} is replaced by {\bf (H5)} and {\bf (H6)}. 

\begin{thm}
\label{up2} Assume that {\bf (H1)}, {\bf (H5)}, {\bf (H6)} and {\bf (H7)} hold,
then the system is uniformly persistent.

\medskip{}{}

More precisely, all solutions of (\ref{eq})-(\ref{ic}) 
with $\varphi_j(t)\geq0$ for all $j$ and $t\in [-\tau,0]$ 
satisfy 
$$\liminf_{t\to+\infty} V(x(t))\ge \min\{ r_0, e^{-k\tau}\eta \}.$$

\end{thm}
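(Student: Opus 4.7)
The plan is to argue by contradiction, paralleling the proof of Theorem~\ref{up1} and following the same three-case analysis spelled out in the preview, but replacing the global monotonicity \textbf{(H4)} with the combination of the local monotonicity \textbf{(H5)} and the exponential bound
$$
v(t-\tau)\;\le\;e^{k\tau}\,v(t),\qquad t\ge\tau,
$$
which was derived from \textbf{(H6)} in the proof of Proposition~\ref{prop3}. Setting $v(t):=V(x(t))$ and $\mu:=\min\{r_0,e^{-k\tau}\eta\}$, I would assume for contradiction that $i:=\liminf_{t\to+\infty}v(t)<\mu$. Then $i<r_0$ and $e^{k\tau}i<\eta$, so for any sequence $(t_n)$ with $v(t_n)\to i$ one has $v(t_n-\tau)\le e^{k\tau}v(t_n)<\eta$ for $n$ large, which is precisely the regime where \textbf{(H5)} is applicable.

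In each of the three cases of the preview, I would pick $t_n\to+\infty$ with $v(t_n)\to i$ and $v'(t_n)\le 0$ exactly as described there, and then split according to the delayed argument. If $v(t_n-\tau)\ge v(t_n)$, the estimate above gives $v(t_n)\le v(t_n-\tau)\le\eta$ so that \textbf{(H5)} yields
$$
v'(t_n)\;\ge\;\langle\nabla V(x(t_n)),\,f(t_n,x(t_n),x(t_n))\rangle,
$$
reducing matters to the diagonal $y=x$; if instead $v(t_n-\tau)<v(t_n)$, then (as noted in Case~2 of the preview) $v(t_n-\tau)\to i$ from below as well, and one applies \textbf{(H7)} directly to $\langle\nabla V(x(t_n)),f(t_n,x(t_n),x(t_n-\tau))\rangle$. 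In either situation \textbf{(H7)} produces a strictly positive liminf along $(t_n)$, contradicting $v'(t_n)\le 0$. Once $\liminf v(t)\ge\mu$ is established, the property $V(x)\to 0$ as $|x|\to 0$ gives a uniform lower bound on $|x(t)|$ for large $t$, which is uniform persistence.

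The principal obstacle is Case~1, where $v(t)\ge i$ eventually so that $v(t_n)$ approaches $i$ from above rather than from below as \textbf{(H7)} calls for. The workaround is to apply \textbf{(H7)} at an auxiliary level $i'\in(i,r_0)$ chosen with $i'>e^{k\tau}i$; the margin $i<e^{-k\tau}\eta$ ensures $e^{k\tau}i<\eta$, and, combined with $v(t_n-\tau)\le e^{k\tau}v(t_n)$, this places both $v(t_n)$ and $v(t_n-\tau)$ inside the one-sided neighborhood $(i'-\varepsilon,i')$ granted by \textbf{(H7)} at $i'$ for $n$ large, closing the argument. Cases~2 and~3 require no such adjustment, since by construction $v(t_n)\to i^-$ and the relevant subcase forces $v(t_n-\tau)\to i^-$ as well, so \textbf{(H7)} applies at $i$ itself.
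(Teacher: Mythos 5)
Your overall strategy is the paper's: argue by contradiction with $i=\liminf_{t\to+\infty}v(t)$, use the bound $v(t-\tau)\le e^{k\tau}v(t)$ derived from \textbf{(H6)} to get $v(t_n)\le v(t_n-\tau)\le e^{k\tau}v(t_n)<\eta$ (so that \textbf{(H5)} applies and reduces matters to the diagonal $y=x$), and then invoke \textbf{(H7)}. Cases 2 and 3 are treated as in the paper, except for one slip: in Case 3, where $v(t)\to i^-$, a sequence with $v'(t_n)\le 0$ need not exist (the preview says so explicitly); the correct argument there, and the one the paper gives, is that \textbf{(H7)} forces $v'(t)>c>0$ for all large $t$, which is incompatible with $v(t)\to i$.

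The genuine gap is your workaround for Case 1. You apply \textbf{(H7)} at an auxiliary level $i'>e^{k\tau}i$ directly to the pair $(x(t_n),x(t_n-\tau))$, claiming both $v(t_n)$ and $v(t_n-\tau)$ land in the one-sided neighborhood $(i'-\varepsilon,i')$ granted by \textbf{(H7)}. This fails twice over. First, the width $\varepsilon$ is dictated by \textbf{(H7)} at the level $i'$ and is not under your control, whereas you would need $(i'-\varepsilon,i')$ to contain the whole band $[i,e^{k\tau}i]$, which has the fixed positive width $(e^{k\tau}-1)i$; nothing guarantees $\varepsilon>i'-i$. Second, and more basically, $v(t_n)\to i\ne i'$, so the pair does not approach $i'$ at all and the liminf in \textbf{(H7)} at level $i'$ says nothing about it. The paper needs no auxiliary level: in Case 1 one still has $v'(t_n)\le 0$ and $v(t_n)\le v(t_n-\tau)\le e^{k\tau}v(t_n)<\eta$, so \textbf{(H5)} yields $0\ge v'(t_n)\ge\langle\nabla V(x(t_n)),f(t_n,x(t_n),x(t_n))\rangle$, and \textbf{(H7)} is then invoked on the diagonal at the level $i$ itself (the paper reads the condition $V(x),V(y)\to i^-$ loosely as the $V$-values tending to $i$). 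Under the strictly one-sided reading of \textbf{(H7)} that motivated your detour, the detour does not actually repair the difficulty.
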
 

\subsection{Proofs of Theorem \ref{up1} and Theorem \ref{up2} }

\begin{proof}
Suppose $\liminf_{t\to+\infty} v(t)=i \in (0,r_0)$
and fix a positive constant $c>0$  such that
$$ 
\liminf_{t\to+\infty, V(x),V(y)\to i^-}
\langle \nabla V(x),f(t,x,y)\rangle>c.
$$ 
If  $v(t)\to i^-$,
then for all $t\gg 0$  
$$
v'(t)=\langle \nabla V(x(t)),f(t,x(t),x(t-\tau)) >c
$$
a contradiction. Thus, we deduce there exists a sequence $s_n\to +\infty$ such that $v(s_n)>i$ and $v(s_n)\to i^+$.  
Take as before $t_n \in [s_1,s_n]$ such that $v(t_n)= \min_{t\in [s_1,s_n]} v(t) $, then $v(t_n)\to i$ and, 
for $n$ large,  $v'(t_n)\le 0$ and $v(t_n)\le v(t_n-\tau)$. This yields a contradiction when {\bf (H4)} holds or when 
{\bf (H5)}--{\bf (H6)} hold, provided that also $i< e^{-k\tau}\eta.$


\end{proof}{}

\medskip{}{}{}

Lastly, inspired by Nicholson's equation, it is worthy mentioning that {\bf (H7)} is not satisfied in many situations when $N>1$: consider, for instance $V(x)=\vert x\vert^{2}$ and
$f(x,y)=y$. 
However, as already observed, most models assume that 
$f(t,0,0)=0$ and, consequently, it is expected that, as the solution gets close to $0$, the 
distance between $x(t)$ and $x(t-\tau)$ is comparatively small. This means that the condition might be relaxed by assuming that the inequality holds only for $|x-y|$ small enough. In more precise terms, we may define 
$$\theta(i):= \limsup_{t\to+\infty, V(x), V(y)\to i} |f(t,x,y)|
$$
and observe that if 
$v(t)\to i^-$ 
then from the mean value inequality 
$$|x(t)-x(t-\tau)|\le \tau |x'(\xi)|\qquad \xi \in (t-\tau,t)$$
we deduce: 
$$
\limsup_{t\to+\infty} |x(t)-x(t-\tau)| \le \tau \theta(i).
$$
Thus, we obtain:
\medskip{}

\begin{thm}
Theorem \ref{up1} and Theorem \ref{up2} are still valid if {\bf (H7)} is replaced by: 
$${\bf (H8)}\hbox{ There exists } r_0>0 \hbox{ such that, for all } i\in (0,r_0) \hbox{ and some } C(i)>\tau \theta(i)$$ 
$$\liminf_{t\to+\infty, V(x),V(y)\to i^-, |x-y|\le C(i) }\langle \nabla V(x),f(t,x,y)\rangle  > 0.
$$ 
\end{thm}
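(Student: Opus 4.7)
The plan is to mimic the proofs of Theorems \ref{up1} and \ref{up2} verbatim, replacing the single appeal to \textbf{(H7)} by one to \textbf{(H8)} in the ``convergent'' subcase $v(t)\to i^-$, where $v(t):=V(x(t))$. Indeed, in those proofs the oscillating subcase — extract $s_n\to+\infty$ with $v(s_n)\to i^+$, set $t_n\in[s_1,s_n]$ with $v(t_n)=\min_{[s_1,s_n]}v$, and note that $v(t_n)\to i$, $v'(t_n)\le 0$ and $v(t_n-\tau)\ge v(t_n)$ — is closed by \textbf{(H4)} or by \textbf{(H5)}--\textbf{(H6)} and never invokes \textbf{(H7)}; that half of the argument thus carries over unchanged. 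The work concentrates on ruling out $v(t)\to i^-$ under the weaker hypothesis.

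Assume for contradiction $\liminf_{t\to+\infty}v(t)=i\in(0,r_0)$ with $v(t)\to i^-$. Because $\xi\to+\infty$ forces $\xi-\tau\to+\infty$ as well, both $v(\xi)$ and $v(\xi-\tau)$ tend to $i$, so by the very definition of $\theta(i)$, for every $\epsilon>0$,
\[
|x'(\xi)|=|f(\xi,x(\xi),x(\xi-\tau))|\le\theta(i)+\epsilon
\]
for all sufficiently large $\xi$. The mean value inequality then gives $|x(t)-x(t-\tau)|\le\tau(\theta(i)+\epsilon)$ for $t$ large. Since $C(i)>\tau\theta(i)$ is built into \textbf{(H8)}, one can fix $\epsilon>0$ small enough that $\tau(\theta(i)+\epsilon)<C(i)$, thereby securing the side condition $|x(t)-x(t-\tau)|\le C(i)$ eventually. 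Hypothesis \textbf{(H8)} now supplies a constant $c>0$ with
\[
v'(t)=\langle\nabla V(x(t)),f(t,x(t),x(t-\tau))\rangle\ge c
\]
for all $t\gg 0$, contradicting $v(t)\to i$.

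Combining this with the unchanged oscillating subcase yields $\liminf v(t)\ge r_0$ in the setting of Theorem \ref{up1} and $\liminf v(t)\ge\min\{r_0,e^{-k\tau}\eta\}$ in the setting of Theorem \ref{up2}. The only genuinely new step is the bookkeeping that produces $|x(t)-x(t-\tau)|\le C(i)$: the slack $C(i)-\tau\theta(i)>0$ that \textbf{(H8)} bakes into its statement is exactly what is needed to absorb the $\epsilon$ coming from the $\limsup$ definition of $\theta(i)$, so this is where the hypothesis was tailor-made and no real obstacle remains. Neither \textbf{(H4)} nor \textbf{(H5)}--\textbf{(H6)} refer to $|x-y|$, so the oscillating subcase needs no adjustment whatsoever.
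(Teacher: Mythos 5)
The proposal is correct and takes essentially the same route the paper intends: the paper's ``proof'' is precisely the observation preceding the theorem that, when $v(t)\to i^-$, the mean value inequality and the definition of $\theta(i)$ give $\limsup_{t\to+\infty}|x(t)-x(t-\tau)|\le\tau\theta(i)<C(i)$, after which \textbf{(H8)} closes that case exactly as \textbf{(H7)} did, and your $\epsilon$-bookkeeping just makes this explicit. One small quibble: the oscillating subcase does need positivity of $\langle\nabla V(x),f(t,x,x)\rangle$ near level $i$ (from \textbf{(H3)}, which is standing in this section, or equivalently from the diagonal of \textbf{(H7)}/\textbf{(H8)}, where $|x-y|=0\le C(i)$), so saying it ``never invokes \textbf{(H7)}'' is slightly overstated, but this creates no gap.
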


\section{Periodic solutions}

When searching for periodic orbits of an ODE system, it is usual to employ a solution  operator such as the Poincar\'e map and apply a standard procedure using the Brouwer degree to obtain fixed points. For the delayed case, since the space of initial values is infinite dimensional, the Brouwer degree cannot 
be applied: we shall use instead Leray-Schauder degree techniques.
\medskip{}{}

Let us recall that the Leray-Schauder degree is defined in this context as follows.
Let $C_T$ be the Banach space of continuous $T$-periodic vector functions, equipped with the standard norm $\|\cdot \|_\infty$. 
Let $U\subseteq C_T$ be open and bounded, and let $K:\overline U\to C_T$ be a compact operator such that $K x\neq x$ for $x\in\partial U$. 
Set $\varepsilon=\inf_{x\in\partial U} \| x-Kx\|_\infty$, and define
$$
{\rm deg}_{L-S}(I - K,U,0) =  {\rm deg}_{B}(\left.(I-K_{\varepsilon})\right|_{V_{\varepsilon}},U\cap V_{\varepsilon} ,0),
$$
where $K_{\varepsilon}$ is an $\varepsilon$-approximation of $K$ with ${\rm Im} (K_{\varepsilon})\subseteq V_{\varepsilon}$ and ${\rm dim}(V_{\varepsilon}) < \infty$.
\medskip

We will show that the Leray-Schauder degree of the operator $I-K$ is non-zero on an appropriate subset of the positive cone $\mathcal{C}\subset C_T$ and therefore the set of fixed points of the compact operator $K$ is non-empty.
\medskip

Inspired by population models, in order to state our result we shall impose a condition that allows to find an upper bound for the flow.
With this aim, we may choose a continuous function $a:[0,+\infty)\to (0,+\infty)$ and define: 

\begin{equation}
\begin{array}{lcl}
F(t,x,y)&=& 
\langle \nabla V(x),f(t,x,y)\rangle + a(t)V(x),\\
F^{*}(t,r)&=& \sup_{V(x),V(y)\le r} {\frac {F(t,x,y)}{a(t)}}.
\end{array}
\end{equation}
For example in Nicholson's equation with $V(x)=x$, we may choose $a(t)=d$, where $d$ is the mortality rate.
\medskip

We can now formulate the main result of this section:  
\begin{thm}
\label{ps1}
Assume that  $f$ is $T$-periodic in the first coordinate,  {\bf (H1)}, {\bf (H3)}, {\bf (H4)} hold and $$\hbox{{\bf (H9)} } F^{*}(t,R)<R \hbox{ for $0\le t\le T$ and some 
$R>r_0$ } $$
where $r_0$ is the constant in {\bf (H3)}.
Then there exists at least one $T$-periodic positive solution of (\ref{eq})-(\ref{ic}) in $\Omega=\{x\in [0,+\infty)^N:V(x)\in(r_0,R)\}$ provided that the Euler characteristic of $\Omega$ is non-zero. 
\end{thm}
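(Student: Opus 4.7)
My plan is to recast the $T$-periodic problem as a fixed-point equation in the Banach space $C_T$ of continuous $T$-periodic $\R^N$-valued functions and apply Leray--Schauder degree theory on an admissible open set adapted to $\Omega$.

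First I would construct the fixed-point operator. Consider the linear operator $L\varphi:=\varphi'+a\varphi$ on $T$-periodic $C^1$ functions. Since $a$ is continuous, strictly positive and $T$-periodic, $L$ has a compact inverse $L^{-1}:C_T\to C_T$ given by integration against a positive Green's kernel. Setting
$$K\varphi(t):=L^{-1}\bigl[a(\cdot)\varphi(\cdot)+f(\cdot,\varphi(\cdot),\varphi(\cdot-\tau))\bigr](t),$$
fixed points of the compact operator $K$ coincide with the $T$-periodic solutions of (\ref{eq}). By {\bf (H1)} and positivity of the kernel, $K$ maps the positive cone of $C_T$ into itself.

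Next, I would choose
$$U:=\{\varphi\in C_T:\varphi(t)\in\Omega\hbox{ for all }t\}$$
and show that $K$ has no fixed points on $\partial U$. Indeed, if $\varphi=K\varphi\in\partial U$, then $v:=V\circ\varphi$ attains either the value $R$ at some maximum $t_{*}$ or the value $r_0$ at some minimum, with $v'(t_{*})=0$. In the first case, $v(t_{*}-\tau)\le R$ together with the definition of $F^{*}$ forces $F^{*}(t_{*},R)\ge R$, contradicting {\bf (H9)}. In the second, $v(t_{*}-\tau)\ge r_0$ combined with {\bf (H4)} yields $v'(t_{*})\ge\langle\nabla V(\varphi(t_{*})),f(t_{*},\varphi(t_{*}),\varphi(t_{*}))\rangle>0$ by {\bf (H3)}, again a contradiction.

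To compute ${\rm deg}_{L-S}(I-K,U,0)$, I would exploit that $U$ deformation-retracts onto the finite-dimensional copy of $\Omega$ sitting inside $C_T$ as constant functions. Via a homotopy interpolating between $K$ and the composition of the constant embedding with time-averaging, the Leray--Schauder degree reduces to the Brouwer degree over $\Omega$ of the averaged field $x\mapsto\frac{1}{T}\int_{0}^{T}f(s,x,x)\,ds$. The sign conditions supplied by {\bf (H3)} on the inner boundary $\{V=r_0\}$ and by {\bf (H9)} on the outer boundary $\{V=R\}$ (which, when $x=y$, forces $\langle\nabla V(x),f(t,x,x)\rangle<0$) make this averaged field point strictly inward along $\partial\Omega$, so a Poincar\'e--Hopf argument identifies its Brouwer degree with $\chi(\Omega)$. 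Since $\chi(\Omega)\neq 0$, the degree of $I-K$ on $U$ is nonzero and produces the desired positive $T$-periodic solution. The main technical obstacle will be maintaining the two a priori boundary estimates \emph{uniformly along the entire homotopy}, so that no fixed point of the deformed operator escapes $U$ at an intermediate parameter value; this requires a parametrized version of the $v'(t_*)=0$ analysis above that remains valid for the interpolating vector fields.
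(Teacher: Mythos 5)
Your proposal follows essentially the same route as the paper: build a compact fixed-point operator on $C_T$ whose fixed points are the $T$-periodic solutions, exclude fixed points on $\partial U$ via the boundary estimates coming from {\bf (H1)}, {\bf (H3)}--{\bf (H4)} (inner boundary $\{V=r_0\}$) and {\bf (H9)} (outer boundary $\{V=R\}$), reduce by homotopy to a Brouwer degree over $\Omega$, and invoke the Hopf theorem to identify that degree with $\pm\chi(\Omega)\neq 0$. The only difference is cosmetic --- you invert $\varphi'+a\varphi$ via a Green's kernel where the paper uses the Mawhin-style operator $Kx=\overline x-t\,\overline{\mathcal Nx}+\mathcal I\mathcal Nx-\overline{\mathcal I\mathcal Nx}$ --- and your boundary analysis (including the slight imprecision of applying {\bf (H3)} at $V(x)=r_0$ exactly) matches the paper's.
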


Alternatively, we have:

\begin{thm}
\label{ps2}
If  $f$ is $T$-periodic in the first coordinate, {\bf (H1)}, {\bf (H3)}, {\bf (H5)}, {\bf (H6)} and {\bf (H9)} hold, then there exists at least one $T$-periodic positive solution of (\ref{eq})-(\ref{ic}). 
\end{thm}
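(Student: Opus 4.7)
The strategy is to run the Leray--Schauder degree argument already sketched for Theorem \ref{ps1} almost verbatim, modifying it only at the two places where the global $V$-monotonicity {\bf (H4)} was used. First I would set up the usual compact fixed-point operator $K:\mathcal{C}\cap C_T\to C_T$ by declaring $K\varphi$ to be the unique $T$-periodic solution of the linear equation
$$x'(t)+a(t)x(t)=a(t)\varphi(t)+f\bigl(t,\varphi(t),\varphi(t-\tau)\bigr),$$
which is representable through the Green's function of the periodic problem $u'+au=h$; compactness follows from Arzel\`a--Ascoli, and invariance of the positive cone is a consequence of {\bf (H1)} together with a standard maximum-principle argument. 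Its fixed points in $\mathcal{C}$ are exactly the non-negative $T$-periodic solutions of $(\ref{eq})$.

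The open set on which the degree will be computed is
$$U:=\bigl\{\varphi\in\mathcal{C}\cap C_T:\;\rho_1<V(\varphi(t))<R\hbox{ for all }t\in[0,T]\bigr\},\qquad 0<\rho_1<\min\{r_0,e^{-k\tau}\eta\},$$
and the heart of the argument is a two-sided a priori estimate preventing fixed points from touching $\partial U$. The upper bound is identical to the one in Theorem \ref{ps1}: if $v:=V\circ x$ attains its maximum at $t_1$ with $v(t_1)=R$, then $v'(t_1)\ge 0$, while {\bf (H9)} forces $v'(t_1)+a(t_1)v(t_1)<a(t_1)R$, a contradiction.

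For the lower bound the global-monotonicity shortcut used for Theorem \ref{ps1} is unavailable; instead, I would repeat the argument of Proposition \ref{prop3}. By {\bf (H6)} every solution satisfies the pointwise comparison $v(t-\tau)\le e^{k\tau}v(t)$. If $t_0$ is a minimum of $v$ with $v(t_0)\le\rho_1$, then $v(t_0-\tau)\ge v(t_0)$ by minimality, and $v(t_0-\tau)\le e^{k\tau}\rho_1\le\eta$, so {\bf (H5)} allows replacing $x(t_0-\tau)$ by $x(t_0)$ in the inner product, and {\bf (H3)} then yields $v'(t_0)>0$, contradicting $v'(t_0)\le 0$. Hence no fixed point of $K$ can reach $\{V\circ\varphi=\rho_1\}$.

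The last step is the degree computation itself. Following the scheme for Theorem \ref{ps1}, I would interpolate $f$ through a homotopy $f_\lambda$ that collapses to a model nonlinearity whose Leray--Schauder degree can be evaluated explicitly (for instance an autonomous field with a single equilibrium of non-zero index in $V^{-1}(\rho_1,R)$), ensuring that {\bf (H3)}, {\bf (H5)}, {\bf (H6)} and {\bf (H9)} remain uniformly valid in $\lambda$ so that the a priori estimates above survive the deformation. The main obstacle is precisely the design of this homotopy: unlike in Theorem \ref{ps1}, the lower bound $e^{-k\tau}\eta$ depends on the constants $k$ and $\eta$ from {\bf (H5)}--{\bf (H6)}, and these must be controlled along the whole path before homotopy invariance can transport the computation and yield ${\rm deg}_{L-S}(I-K,U,0)\ne 0$, whence the existence of a $T$-periodic positive solution in $U$.
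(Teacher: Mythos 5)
Your a priori estimates are sound and correctly isolate the one place where Theorem \ref{ps2} differs from Theorem \ref{ps1}: the lower barrier at $\{V=\rho_1\}$ is obtained exactly as in Proposition \ref{prop3}, using {\bf (H6)} to get $v(t_0-\tau)\le e^{k\tau}v(t_0)\le\eta$ at a minimum point so that {\bf (H5)} and then {\bf (H3)} apply; this is precisely how the paper handles it (the published proof treats both theorems at once and only remarks that the contradiction at the minimizing sequence requires $i<e^{-k\tau}\eta$ in the {\bf (H5)}--{\bf (H6)} case). Your choice of fixed-point operator (the Green's function of $u'+au=h$) differs from the paper's Mawhin-type continuation operator $Kx=\overline x-t\,\overline{\mathcal Nx}+\mathcal I\mathcal Nx-\overline{\mathcal I\mathcal Nx}$, but both are standard and either would serve.

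The genuine gap is the degree computation, which you defer to an unspecified homotopy of $f$ onto ``a model nonlinearity with a single equilibrium of non-zero index in $V^{-1}(\rho_1,R)$.'' This is not a detail that can be postponed: whether such a field exists, compatibly with the boundary behaviour that your a priori estimates force (inward-pointing on $\{V=\rho_1\}$ and $\{V=R\}$, and on the coordinate faces by {\bf (H1)}), is governed by the Poincar\'e--Hopf theorem, and the sum of the indices of any admissible field equals $\pm\chi(\Omega)$ where $\Omega=\{x\in[0,+\infty)^N:V(x)\in(\rho_1,R)\}$. If that Euler characteristic vanishes, no such single-equilibrium model field exists and the degree may well be zero, so the non-vanishing of the degree cannot simply be asserted. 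The paper closes exactly this gap in a concrete way: it first homotopies the infinite-dimensional operator to the averaged map $K_0x=\overline x-\frac T2\overline{\mathcal Nx}$, whose range lies in the constants, reducing the Leray--Schauder degree to a Brouwer degree on $\Omega\subset\R^N$; it then homotopies that finite-dimensional field to the outward normal $\nu$ and invokes Hopf's theorem to get $\deg_B(-\nu,\Omega,0)=(-1)^N\chi(\Omega)$. This is where the hypothesis $\chi(\Omega)\ne 0$ (stated explicitly in Theorem \ref{ps1}) enters, and your proposal never identifies this topological input; without it the final assertion ${\rm deg}_{L-S}(I-K,U,0)\ne 0$ is unjustified. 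You should either reproduce the reduction-plus-Hopf argument or verify directly that the relevant region in the positive cone has non-zero Euler characteristic in the setting of Theorem \ref{ps2}.
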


The proof of our main theorem shall be based on the following crucial result (see \cite{hopf}):

\begin{thm}[Hopf Theorem]
If $\nu$ is the outward normal on an
oriented, compact manifold $M$, then the degree of $\nu$ equals the Euler characteristic of $M$.
\end{thm}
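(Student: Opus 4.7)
The plan is to deduce the equality from the Poincar\'e--Hopf index theorem, by extending the outward normal $\nu$ to a smooth vector field on all of $M$ whose zero-set index sum computes $\chi(M)$, and then identifying that sum with $\deg(\nu)$ via a boundary degree calculation.

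First, I would extend $\nu$ to a smooth vector field $X$ on $M$ with only finitely many, non-degenerate zeros $p_1,\dots,p_k$ in the interior. One concrete way is to choose a Morse function $f$ on $M$ such that $\partial M$ is a regular level set and all critical points lie in the interior, and set $X=\nabla f$ with respect to a suitable Riemannian metric; then $X$ points outward along $\partial M$ by construction, and its interior zeros are precisely the critical points of $f$. A generic perturbation guarantees the non-degeneracy of these zeros.

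Second, I would invoke the Poincar\'e--Hopf theorem for compact manifolds with boundary along which the vector field points outward, yielding $\sum_{i=1}^{k}\mathrm{ind}_{p_i}(X)=\chi(M)$. For $X=\nabla f$ this reduces to the standard Morse-theoretic formula $\chi(M)=\sum_i(-1)^{\lambda_i}$, where $\lambda_i$ is the Morse index at $p_i$, obtained from the cellular decomposition of $M$ associated to $f$.

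Third, I would identify the index sum with $\deg(\nu)$ by excising small disjoint balls $B_i$ around each zero and applying Stokes' theorem to the pull-back by $X/|X|$ of the volume form on the appropriate sphere. Since $X/|X|$ is smooth on $M\setminus\bigcup_i B_i$, one obtains $\deg(\nu)=\sum_i \deg\bigl(X/|X|\,\big|_{\partial B_i}\bigr)=\sum_i\mathrm{ind}_{p_i}(X)$, where the first equality is the Stokes computation (using that $X=\nu$ on $\partial M$) and the second is the very definition of the index. Combining the two displayed identities gives $\deg(\nu)=\chi(M)$.

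The main obstacle I expect is the orientation bookkeeping in the excision step: the boundary of $M\setminus\bigcup_i B_i$ consists of $\partial M$, with its natural induced orientation from $M$, together with the $\partial B_i$ each carrying the \emph{opposite} of its standard ball-boundary orientation, and these signs must be reconciled carefully with the sign conventions in the definitions of the index and of the degree. A secondary subtlety is justifying the Poincar\'e--Hopf statement in the boundary-with-outward-field version directly, rather than invoking the closed-manifold version and doubling; this can be done by a standard collar neighborhood argument.
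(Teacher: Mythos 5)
The paper does not prove this statement: it is quoted as a classical result and attributed to Hopf's 1927 paper \cite{hopf} (the standard modern reference being Milnor's book, also cited as \cite{M}), so there is no internal proof to compare yours against. On its own merits, your outline is the standard and correct argument. Your step 3 (excision of small balls around the zeros plus Stokes' theorem applied to the pull-back of the sphere's volume form by $X/|X|$) is precisely Hopf's lemma that the sum of the indices of any vector field pointing outward along $\partial M$ equals the degree of the Gauss map; your step 2, computed Morse-theoretically as $\sum_i(-1)^{\lambda_i}=\chi(M)$, supplies the value of that common index sum, and this is the right way to avoid circularity, since Poincar\'e--Hopf for manifolds with boundary is usually itself proved via the very degree identity of step 3. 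Two small points to tighten: (i) take $\partial M$ to be the top regular level set of the Morse function, so that $\nabla f/|\nabla f|$ is literally the outward unit normal on $\partial M$ (merely ``outward-pointing'' would require an extra homotopy to $\nu$ through nonvanishing fields, which is harmless but should be said); (ii) in the form actually used in the paper, the degree of $\nu$ is the Brouwer degree $\deg_B(\nu,\Omega,0)$ of an extension to the interior, which coincides with the degree of the boundary map $\nu:\partial M\to S^{N-1}$ by the boundary-dependence property of the degree --- worth one line to connect your statement to the paper's usage $\deg_B(-\nu,\Omega,0)=(-1)^N\chi(\Omega)$.
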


\subsection{Proof of Theorem \ref{ps1} and Theorem \ref{ps2}}

\begin{proof}
Let us begin by introducing the standard continuation method \cite{maw}, following the notation of  \cite{AKR}. 
For a function $x\in C_T$, let us
write
$$\mathcal Ix(t):= 
\int_0^t x(s)\, ds, \qquad 
\overline x:= \frac{1}{T}\,{\mathcal I x(T)}.
$$

Moreover, denote by $\mathcal N$ the Nemitskii operator associated to the problem, namely 
$$
\mathcal Nx(t):= f(t,x(t),x(t-\tau)). 
$$

Let us consider the open bounded sets $\Omega\subseteq\R^N$, $U=\{x\in \mathcal{C}:x(t)\in \Omega\,\hbox{ for all $t>0$} \}\subseteq C_T$ and define the compact operator 
$K:\mathcal{C}\to C_T$ by
$$
Kx(t):= \overline x -t\, \overline {\mathcal Nx} + 
\mathcal I\mathcal Nx(t) - \overline{\mathcal I\mathcal Nx}.
$$
First, let us observe that $K$ is well-defined, i.e. $Kx$ is $T$-periodic.
Fix $x\in \mathcal{C}$, since $f$ is $T$-periodic,
$$
Kx(t+T)-Kx(t)=-T\,\overline{\mathcal Nx}+
\int_{t}^{t+T} \mathcal Nx(s)\, ds= -{\mathcal Nx}+
\int_{0}^{T} \mathcal Nx(s)\, ds=0.
$$

Moreover, via the Lyapunov-Schmidt reduction, if $x\in \mathcal{C}$ is a fixed point of $K$ then $x$ is a solution of the equation. Indeed, a fixed point of $K$ verifies $\overline{\mathcal Nx}=0$ whence $x'=\mathcal Nx$.

Let $K_0:\overline{U} \to \R^N$ be the continuous function $K_0x:= \overline x -\frac{T}{2}\,
\overline {\mathcal Nx}$  and consider, for $\lambda\in [0,1]$, the homotopy $K_{\lambda}:=\lambda K +(1-\lambda)K_0$. 
As before, for $\lambda>0$ we know that $x\in\overline U$ is a fixed point of $K_\lambda$ if and only if
$x'(t)=\lambda\mathcal Nx(t)$, that is:
$$
x'(t)= \lambda f(t, x(t),x(t-\tau)).
$$
We claim that $K_\lambda$ has no fixed points on $\partial U$.
This follows from the fact that the flow is inwardly pointing on $\partial\Omega$. Indeed, if
$x\in\overline\Omega\cap \{x_i = 0\}$, the result is deduced from {\bf (H1)}. If $x\in\{V=R\}$ or $x\in\{V=r_0\}$ then the outward normal is $\nabla V(x)$ and $-\nabla V(x)$, respectively. Then notice that {\bf (H9)} implies that if $V(x)=R$ then 
$\langle \nabla V(x),f(t,x,y)\rangle <0$ and on the other hand, {\bf (H2)} implies $\langle \nabla V(x),f(t,x,y)\rangle>0$ when  $V(x)=r_0$.

Let us identify 
 $\R^N$ with the set of constant functions of $C_T$,  whence $U \cap \,\R^N=\Omega$. Since the range of $K_0$ is contained in $\R^N$ the Leray-Schauder degree of $I-K_0$ can be computed as the Brouwer degree of its restriction to  $\Omega$. 

Apply another homotopy,  
$$
H(x,\lambda)=\lambda K_0(x)-(1-\lambda)\nu(x),\qquad\hbox{for $(x,\lambda)\in\overline{\Omega}\times[0,1]$},$$
where $\nu$ is the outward normal, which by {\bf (H9)} does not have fixed points on $\partial\Omega$.
By the homotopy invariance of the degree and Hopf theorem, we conclude that
$$
deg_{LS}(I-K,U,0)=deg_{B}(I-K_0,\Omega,0)=deg_{B}(-\nu,\Omega,0)=(-1)^N\chi(\Omega)\neq 0.$$

\end{proof}

\subsection{Global attractiveness of the 
trivial equilibrium}

In this section, we analyze the global attractiveness of the 
trivial equilibrium. 
We shall prove that, in some sense, the previous conditions are also necessary: more precisely, we shall see that if they are not fulfilled,  
then accurate extra assumptions imply that zero is a global attractor.   

Let us define
$$
\phi^*(r):= 
\sup_{t\ge 0} {F^*(t,r)},
$$
and assume that $\phi^*$ is continuous.

\medskip 

\begin{thm}
Assume that for every $\varepsilon>0$
there exists $\mu>0$ such that $\{x: V(x)\in(0,\mu)\} \subset B_\varepsilon(0)$  and suppose  there exists $R_0$ such that  $\phi^*(r)<r$ for $0<r\le R_0$. 
Then every solution with initial data $\varphi$, such that $\varphi_{j}(t)< R_0$ for all $j$ and $t\in [-\tau,0]$, tends to $0$ as $t\to +\infty$.  

\end{thm}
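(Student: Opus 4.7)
The plan is to work with the scalar $v(t) := V(x(t))$ and turn the hypothesis $\phi^*(r) < r$ into a scalar delay differential inequality of Halanay type. The key observation is that, unpacking the definitions of $F$, $F^*$ and $\phi^*$, whenever $V(x(t)), V(x(t-\tau)) \le r$ one has
\[
v'(t) + a(t)\, v(t) \;=\; F(t, x(t), x(t-\tau)) \;\le\; a(t)\,\phi^*(r),
\]
that is, $v'(t) \le a(t)(\phi^*(r) - v(t))$. This is the workhorse inequality used in both of the following steps.

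\textbf{Step 1 (invariance of $\{V < R_0\}$).} Reading the initial-data hypothesis as giving $v(s) < R_0$ for $s \in [-\tau, 0]$, I would argue by contradiction. Let $t_0 > 0$ be the first instant at which $v(t_0) = R_0$. Then $v(t_0 - \tau) \le R_0$ and $v'(t_0) \ge 0$, while applying the workhorse inequality with $r = R_0$ gives $v'(t_0) \le a(t_0)(\phi^*(R_0) - R_0) < 0$, a contradiction. Thus $v(t) < R_0$ for every $t \ge 0$, so the strict contractive inequality $\phi^*(r) < r$ is available for every $r$ that bounds the trajectory from above.

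\textbf{Step 2 ($\limsup v = 0$).} Let $L := \limsup_{t \to +\infty} v(t) \in [0, R_0]$ and suppose by contradiction that $L > 0$. For any $\epsilon > 0$ there is $T_0$ such that $v(s) \le L + \epsilon$ for all $s \ge T_0 - \tau$, so the workhorse inequality yields $v'(t) + a(t) v(t) \le a(t)\,\phi^*(L + \epsilon)$ for $t \ge T_0$. Multiplying by the integrating factor $e^{A(t)}$, with $A(t) := \int_0^t a(s)\,ds$, and integrating from $T_0$ to $t$ gives
\[
v(t) \;\le\; \phi^*(L+\epsilon) + \bigl(v(T_0) - \phi^*(L+\epsilon)\bigr) e^{A(T_0) - A(t)}.
\]
Provided $A(t) \to +\infty$, this forces $\limsup_{t \to +\infty} v(t) \le \phi^*(L + \epsilon)$; letting $\epsilon \to 0$ and using continuity of $\phi^*$ yields $L \le \phi^*(L)$, contradicting $\phi^*(L) < L$ for $L \in (0, R_0]$. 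Hence $v(t) \to 0$. Finally, the hypothesis that $\{x : V(x) \in (0,\mu)\} \subset B_\varepsilon(0)$ for every $\varepsilon > 0$ transports $v(t) \to 0$ into $x(t) \to 0$, completing the proof.

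The main technical point I anticipate is the tacit need for $A(t) \to +\infty$, which is not explicitly listed among the hypotheses: it is automatic in the autonomous or periodic setting and whenever $a$ is bounded below by a positive constant (as is the case for the Nicholson mortality rate motivating the paper), but it deserves either a remark or an explicit lower bound on $a$. A smaller notational concern is that Step 1 really uses $V(\varphi(s)) < R_0$ on $[-\tau, 0]$, so the componentwise condition $\varphi_j(t) < R_0$ in the statement should presumably be read (or strengthened) in that direction.
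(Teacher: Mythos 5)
Your proof is correct modulo the caveat you yourself raise, and it takes a recognizably different route from the paper's. Both arguments rest on the same unpacking of the definitions: whenever $v(t),v(t-\tau)\le r$ one has $v'(t)+a(t)v(t)=F(t,x(t),x(t-\tau))\le a(t)\phi^*(r)$. The paper exploits this only at instants where $v'(t)\ge 0$, where it yields the pointwise bound $v(t)\le \phi^*(r)$ with no integration at all; it then iterates $R_{k+1}=\phi^*(R_k)\to 0$ and disposes separately of the case in which $v$ decreases strictly to a positive limit. You instead integrate the inequality against $e^{A(t)}$ and run a single fluctuation argument on $L=\limsup_{t\to+\infty}v(t)$, obtaining $L\le\phi^*(L)$ and hence $L=0$. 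Your version is shorter and avoids both the iteration and the case analysis, but, as you correctly flag, it genuinely requires $\int_0^\infty a(s)\,ds=+\infty$: if $a$ is integrable, the comparison equation $v'=a(t)(\phi^*(r)-v)$ shows the exponential term need not die and $\limsup v$ can stay above $\phi^*(r)$. The paper's pointwise device needs no such hypothesis in the oscillatory regime; on the other hand, its dismissal of the monotone case (``it follows that $v(t)\le\phi^*(\tilde r)$ for $t\ge\tilde t+\tau$'') is precisely where $v'\ge 0$ never holds and the same integrability issue resurfaces, so the tacit assumption you identify is, in effect, also latent in the paper's argument rather than a defect peculiar to yours. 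Your remark that the initial condition should be read as $V(\varphi(t))<R_0$ rather than componentwise is likewise on target: the paper's proof silently makes the same substitution when it starts the iteration from $v\le R_0$.
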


\begin{proof}
Observe, in the first place, that if $v\le r$ on $[t-\tau, t]$ and
$v'(t)\ge 0$, then 
$$a(t)v(t)\le F(t,x(t),x(t-\tau))
$$
that is,
$$
v(t) \le F^{*}(t,r). 
$$
This implies either that $v(t)$ is initially decreasing or else $v(t)\le R_1:={\phi^*(R_0)}<R_0$ for all $t\ge 0$. Furthermore, if $v(t)$ decreases and enters into the interval $(0,R_1]$ at some value $t_1$, then it remains there for all $t\ge t_1$.  Repeating the reasoning for $R_{k+1}:= {\phi^*(R_k)}<R_k$, two different situations may occur:

\begin{enumerate}
    \item There exists $t_k\to +\infty$ such that $v(t)\in (0,R_k]$ for $t\ge t_k$. 
    
\item There exist $k$ and $t_k$ such that $v(t) \in (R_{k+1},R_k]$ and decreases strictly for $t\ge t_k$.

\end{enumerate}

Since the sequence $(R_k)_{k\in\N}$ is strictly decreasing, it follows that 
$$
\phi^*(\lim_{j\to \infty} R_k) = \lim_{j\to \infty} \phi^*(R_k)= \lim_{j\to \infty} R_k. 
$$
Due to the fact that $\phi^*(r)<r$, we deduce that $R_k\to 0$ and hence, in the first case, that $v(t)\to 0$ as $t\to \infty$. 

In order to complete the proof, we shall prove that the second situation cannot happen. Indeed, if $v(t)\to r>0$ as $t\to+\infty$, then we may fix $\tilde r>r$ such that $\phi^*(\tilde r)<r$ and $\tilde t$ such that $v(t)\le \tilde r$ for $t\ge \tilde t$. It follows that 
$v(t)\le \phi^*(\tilde r)<r$ for $t\ge \tilde t+\tau$, which contradicts the fact that $v(t)\to r$.
Hence, for $\varepsilon >0$ we may fix $\mu >0$ 
such that $|x|<\varepsilon$ for $V(x)<\mu$. Set $t_0$ such that $v(t)<\mu$ for $t>t_0$ 
and it follows that $|x(t)|<\varepsilon$ for $t>t_0$. 

\end{proof}

\begin{exa}

In Nicholson's model (\ref{nich}) with $p\le d$ and $V(x)=\frac {x^2}2$ we have that $a=2d$,  $F(x,y)=pxye^{-y}$ and
$$\phi^*(r)= \max_{x,y\le \sqrt{2r}} \frac p{2d} xye^{-y}
\le
\left\{\begin{array}{cc}
re^{-\sqrt{2r}}     &  \hbox{if }\, r\le 1\\
\frac {\sqrt{2r}}{2e}     & \hbox{if }\, r > 1.
\end{array}\right.
$$ 
Thus, $\phi^*(r)<r$ for all $r>0$, so we conclude that $0$ is a global attractor for all positive solutions. 
The result is still true if $p$ and $d$ are positive continuous functions, provided that $p(t)\le d(t)$
for all $t$.

\end{exa}

\end{document}